\newcommand{\Hom}{\operatorname{Hom}\nolimits}
\renewcommand{\mod}{\operatorname{mod}\nolimits}
\newcommand{\grmod}{\operatorname{grmod}\nolimits}
\newcommand{\Ext}{\operatorname{Ext}\nolimits}
\newcommand{\HH}{\operatorname{HH}\nolimits}
\newcommand{\La}{\operatorname{\Lambda}\nolimits}
\newcommand{\Ga}{\operatorname{\Gamma}\nolimits}
\newcommand{\op}{\operatorname{op}\nolimits}
\newcommand{\V}{\operatorname{V}\nolimits}
\newcommand{\T}{\operatorname{T}\nolimits}
\newcommand{\e}{\operatorname{e}\nolimits}
\newcommand{\Lae}{\operatorname{\Lambda^{\e}}\nolimits}
\newtheorem{theorem}{Theorem}[section]
\newtheorem{lemma}[theorem]{Lemma}
\newtheorem{proposition}[theorem]{Proposition}
\theoremstyle{definition}
\theoremstyle{definition}
\theoremstyle{definition}
\newtheorem*{examples}{Examples}
\theoremstyle{remark}
\theoremstyle{remark}
\theoremstyle{definition}
\begin{document}
\title{$\Ext$-symmetry over quantum complete intersections}
\author{Petter Andreas Bergh}
\address{Petter Andreas Bergh \\ Institutt for matematiske fag \\
NTNU \\ N-7491 Trondheim \\ Norway} \email{bergh@math.ntnu.no}

\thanks{The author was supported by NFR Storforsk grant no.\
167130}

\subjclass[2000]{16E30, 16S80, 16U80, 81R50}

\keywords{Quantum complete intersections, vanishing of cohomology,
symmetry}

\maketitle

\begin{abstract}
We show that symmetry in the vanishing of cohomology holds for
graded modules over quantum complete intersections. Moreover,
symmetry holds for all modules if the algebra is symmetric.
\end{abstract}

\section{Introduction}\label{sec1}

For which algebras do symmetry in the vanishing of cohomology hold?
That is, given an algebra $\La$, does the implication
$$\Ext_{\La}^i(M,N)=0 \text{ for } i \gg 0 \hspace{2mm}
\Rightarrow \hspace{2mm} \Ext_{\La}^i(N,M)=0 \text{ for } i \gg
0$$ hold for finitely generated modules $M$ and $N$? As shown in
\cite{AvramovBuchweitz}, this implication holds for finitely
generated modules over commutative local complete intersections.
The proof involves the theory of certain support varieties
attached to each pair of finitely generated modules over such a
ring $A$. Namely, denote by $c$ the codimension of $A$ and by $K$
the algebraic closure of its residue field. A cone $\V (M,N)$ in
$K^c$ is associated to every pair $(M,N)$ of finitely generated
$A$-modules, with the following properties:
\begin{eqnarray*}
\V (M,N)= \{ 0 \} & \Leftrightarrow & \Ext_A^i (M,N)=0 \text{
for } i \gg 0, \\
\V (M,N) & = & \V (M,M) \cap \V (N,N).
\end{eqnarray*}
The symmetry in the vanishing of cohomology follows immediately from
these properties. As shown in \cite[Corollary 4.8]{Mori}, group
algebras of finite groups provide another class of examples where
$\Ext$-symmetry holds.

We show in this paper that $\Ext$-symmetry holds for all
\emph{graded} modules over quantum complete intersections, provided
all the defining commutators are roots of unity. We also show that,
if such an algebra is symmetric, that is, if it is isomorphic as a
bimodule to its own dual, then symmetry holds for all modules.

\section{Quantum complete intersections}\label{sec2}

Quantum complete intersections are noncommutative analogues of
truncated polynomial rings. Included in this class of algebras are
exterior algebras and finite dimensional complete intersections of
the form $k[X_1, \dots, X_c]/(X_1^{a_1}, \dots, X_c^{a_c})$. Fix a
field $k$, let $c \ge 1$ be an integer, and let ${\bf{q}} =
(q_{ij})$ be a $c \times c$ commutation matrix with entries in $k$.
That is, the diagonal entries $q_{ii}$ are all $1$, and
$q_{ij}q_{ji}=1$ for all $i,j$. Furthermore, let ${\bf{a}}_c = (a_1,
\dots, a_c)$ be an ordered sequence of $c$ integers with $a_i \ge
2$. The \emph{quantum complete intersection}
$A_{\bf{q}}^{{\bf{a}}_c}$ determined by these data is the algebra
$$A_{\bf{q}}^{{\bf{a}}_c} \stackrel{\text{def}}{=} k \langle X_1,
\dots, X_c \rangle / (X_i^{a_i}, X_iX_j-q_{ij}X_jX_i).$$ This is a
finite dimensional selfinjective algebra of dimension
$\prod_{i=1}^c a_i$. The image of $X_i$ in this quotient will be
denoted by $x_i$. We shall consider $A_{\bf{q}}^{{\bf{a}}_c}$ as a
$\mathbb{Z}^c$-graded algebra, in which the degree of the
generator $x_i$ is the $i$th unit vector $(0, \dots, 1, \dots 0)$.
The category of finitely generated left
$A_{\bf{q}}^{{\bf{a}}_c}$-modules (respectively, graded modules)
is denoted by $\mod A_{\bf{q}}^{{\bf{a}}_c}$ (respectively,
$\grmod A_{\bf{q}}^{{\bf{a}}_c}$); all modules are assumed to be
finitely generated.

A quantum complete intersection is built from truncated polynomial
rings using certain tensor products. We recall here the basics,
details can be found in \cite{BerghOppermann}. Let $A$ and $B$ be
abelian groups, and let $\La$ and $\Ga$ be an $A$-graded and a
$B$-graded $k$-algebra, respectively. Furthermore, let $t \colon A
\otimes_{\mathbb{Z}} B \to k \setminus \{ 0 \}$ be a homomorphism
of groups, where $k \setminus \{ 0 \}$ is multiplicative. If $a$
and $b$ are elements of $A$ and $B$, respectively, then we write
$t(a|b)$ instead of $t(a \otimes b)$. Moreover, given homogeneous
elements $\lambda \in \La$ and $\gamma \in \Ga$ of degrees $d$ and
$d'$, respectively, we write $t( \lambda | \gamma )$ instead of
$t(d|d')$. With these data, we can now define a new algebra $\La
\otimes_k^t \Ga$, the \emph{twisted tensor product} of $\La$ and
$\Ga$ with respect to the homomorphism $t$. The underlying
$k$-vector space of $\La \otimes_k^t \Ga$ is $\La \otimes_k \Ga$,
and multiplication is given by
$$( \lambda_1 \otimes \gamma_1 ) \cdot ( \lambda_2 \otimes \gamma_2
) \stackrel{\text{def}}{=} t( \lambda_2 | \gamma_1 ) \lambda_1
\lambda_2 \otimes \gamma_1 \gamma_2$$ for homogeneous elements
$\lambda_1, \lambda_2 \in \La$ and $\gamma_1, \gamma_2 \in \Ga$.
This algebra is $A \oplus B$-graded; if $a \in A$ and $b \in B$,
then $( \La \otimes_k^t \Ga )_{(a,b)} = \La_a \otimes_k \Ga_b$.
Given a graded $\La$-module $M$ and a graded $\Ga$-module $N$,
their tensor product $M \otimes_k N$ becomes a graded $\La
\otimes_k^t \Ga$-module by defining
$$( \lambda \otimes \gamma ) \cdot ( m \otimes n )
\stackrel{\text{def}}{=} t(m| \gamma ) \lambda m \otimes \gamma n$$
for homogeneous elements $\lambda \in \La, \gamma \in \Ga, m \in M,
n \in N$. This module is denoted $M \otimes_k^t N$. As above, this
module is $A \oplus B$-graded; if $a \in A$ and $b \in B$, then $(M
\otimes_k^t N)_{(a,b)} = M_a \otimes_k N_b$.

The purpose of this paper is to study symmetry in the vanishing of
cohomology over quantum complete intersections. We therefore end
this section with the following two results, the first of which
shows that quantum complete intersections are made up of twisted
tensor products.

\begin{lemma}\label{twisted}
Let $A_{\bf{q}}^{{\bf{a}}_c}$ be a quantum complete intersection
with $c \ge 2$, let $I$ be a proper nonempty subset of $\{ 1, \dots,
c \}$ of order $c_1$, and let $c_2 = c-c_1$. Furthermore, let
$A_{{\bf{q}}_1}^{{\bf{a}}_{c_1}}$ and
$A_{{\bf{q}}_2}^{{\bf{a}}_{c_2}}$ be the quantum complete
intersections generated by $\{ x_i \}_{i \in I}$ and $\{ x_i \}_{i
\in \{ 1, \dots, c \} \setminus I}$, respectively. Then there is an
isomorphism
$$A_{\bf{q}}^{{\bf{a}}_c} \simeq A_{{\bf{q}}_1}^{{\bf{a}}_{c_1}}
\otimes_k^t A_{{\bf{q}}_2}^{{\bf{a}}_{c_2}}$$ for some
homomorphism $\mathbb{Z}^{c_1} \otimes_{\mathbb{Z}}
\mathbb{Z}^{c_2} \to k \setminus \{ 0 \}$.
\end{lemma}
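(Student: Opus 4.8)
The plan is to exhibit an explicit graded algebra isomorphism sending each generator $x_i$ of $A_{\bf{q}}^{{\bf{a}}_c}$ into one of the two tensor factors according to whether $i \in I$. For notational convenience I would first relabel the variables so that $I = \{ 1, \dots, c_1 \}$; this only replaces ${\bf{q}}$ and ${\bf{a}}_c$ by the evident permuted versions, so there is no loss of generality. Then $A_{{\bf{q}}_1}^{{\bf{a}}_{c_1}}$ is generated by $x_1, \dots, x_{c_1}$ and $A_{{\bf{q}}_2}^{{\bf{a}}_{c_2}}$ by $x_{c_1+1}, \dots, x_c$, with commutation matrices the corresponding principal submatrices of ${\bf{q}}$.

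Next I would fix the homomorphism $t$. Since the abelian group $\mathbb{Z}^{c_1} \otimes_{\mathbb{Z}} \mathbb{Z}^{c_2}$ is free on the elements $\epsilon_i \otimes \epsilon_j$ with $1 \le i \le c_1 < j \le c$ (writing $\epsilon_\ell$ for a standard basis vector), there is a unique group homomorphism $t \colon \mathbb{Z}^{c_1} \otimes_{\mathbb{Z}} \mathbb{Z}^{c_2} \to k \setminus \{ 0 \}$ determined by $t( \epsilon_i | \epsilon_j ) = q_{ji}$. With this $t$, define $\varphi$ on generators by $\varphi(x_i) = x_i \otimes 1$ for $i \le c_1$ and $\varphi(x_i) = 1 \otimes x_i$ for $i > c_1$, and verify that the defining relations of $A_{\bf{q}}^{{\bf{a}}_c}$ are respected. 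The relations $x_i^{a_i} = 0$, and the commutation relations among variables from the same factor, follow at once from the corresponding relations in $A_{{\bf{q}}_1}^{{\bf{a}}_{c_1}}$ or $A_{{\bf{q}}_2}^{{\bf{a}}_{c_2}}$, since $t$ is trivial as soon as one of its arguments has degree zero. For a mixed pair $i \le c_1 < j$ the twisted multiplication gives $\varphi(x_i)\varphi(x_j) = (x_i \otimes 1)(1 \otimes x_j) = x_i \otimes x_j$ and $\varphi(x_j)\varphi(x_i) = (1 \otimes x_j)(x_i \otimes 1) = t(\epsilon_i | \epsilon_j)\,(x_i \otimes x_j) = q_{ji}(x_i \otimes x_j)$, so the relation $x_i x_j = q_{ij} x_j x_i$ is preserved precisely because $q_{ij}q_{ji} = 1$. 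Hence $\varphi$ is a well-defined algebra homomorphism, and it is homogeneous once $\mathbb{Z}^c$ is identified with $\mathbb{Z}^{c_1} \oplus \mathbb{Z}^{c_2}$ through the chosen partition of indices.

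It remains to see that $\varphi$ is bijective. For surjectivity, note that $A_{{\bf{q}}_1}^{{\bf{a}}_{c_1}} \otimes_k^t A_{{\bf{q}}_2}^{{\bf{a}}_{c_2}}$ is spanned over $k$ by elements $\lambda \otimes \gamma$ with $\lambda$ a monomial in $x_1, \dots, x_{c_1}$ and $\gamma$ a monomial in $x_{c_1+1}, \dots, x_c$, and $\lambda \otimes \gamma = (\lambda \otimes 1)(1 \otimes \gamma)$ lies in the subalgebra generated by the $\varphi(x_i)$. Since
$$\dim_k A_{\bf{q}}^{{\bf{a}}_c} = \prod_{i=1}^c a_i = \Bigl( \prod_{i \le c_1} a_i \Bigr) \Bigl( \prod_{i > c_1} a_i \Bigr) = \dim_k \bigl( A_{{\bf{q}}_1}^{{\bf{a}}_{c_1}} \otimes_k^t A_{{\bf{q}}_2}^{{\bf{a}}_{c_2}} \bigr),$$
the surjection $\varphi$ between $k$-vector spaces of equal finite dimension is an isomorphism.

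I do not expect a real obstacle; the only delicate point is the bookkeeping of scalars in the twisted product — in particular choosing $t(\epsilon_i | \epsilon_j) = q_{ji}$ rather than $q_{ij}$, which is exactly what makes the mixed commutation relations come out correctly, together with the observation that $t$ genuinely extends to a group homomorphism on the tensor product because that group is free on the $\epsilon_i \otimes \epsilon_j$.
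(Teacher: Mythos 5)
Your proof follows essentially the same route as the paper: relabel so that $I = \{1,\dots,c_1\}$, define $t$ by $t(\epsilon_i|\epsilon_j) = q_{ji}$ on the generators of $\mathbb{Z}^{c_1} \otimes_{\mathbb{Z}} \mathbb{Z}^{c_2}$, map $x_i$ to $x_i \otimes 1$ or $1 \otimes x_i$, and verify the commutation relations, which come out correctly because $q_{ij}q_{ji}=1$. Your version is in fact a bit more careful than the paper's — you explicitly justify that $t$ extends to a group homomorphism via freeness of the tensor product, and you supply the surjectivity-plus-dimension-count argument to conclude bijectivity, both of which the paper leaves implicit.
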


\begin{proof}
By re-indexing the generators, we may assume that $I= \{ 1, \dots,
c_1 \}$. Thus $A_{{\bf{q}}_1}^{{\bf{a}}_{c_1}}$ is the subalgebra of
$A_{\bf{q}}^{{\bf{a}}_c}$ generated by $x_1, \dots, x_{c_1}$,
whereas $A_{{\bf{q}}_2}^{{\bf{a}}_{c_2}}$ is the subalgebra
generated by the $c_2$ elements $x_{c_1+1}, \dots, x_c$.

Consider the map $\mathbb{Z}^{c_1} \times \mathbb{Z}^{c_2}
\xrightarrow{t'} k \setminus \{ 0 \}$ defined by
$$\left ( (d_1, \dots, d_{c_1}),(d_{c_1+1}, \dots, d_c) \right )
\mapsto \prod_{j=c_1+1}^c \prod_{i=1}^{j-1}q_{ji}^{d_id_j}.$$ Given
sequences ${\bf{d}}_1, {\bf{d}}_1' \in \mathbb{Z}^{c_1}$ and
${\bf{d}}_2, {\bf{d}}_2' \in \mathbb{Z}^{c_2}$, the equalities
\begin{eqnarray*}
t'( {\bf{d}}_1 + {\bf{d}}_1', {\bf{d}}_2 ) & = & t'( {\bf{d}}_1,
{\bf{d}}_2 ) t'( {\bf{d}}_1', {\bf{d}}_2 ) \\
t'( {\bf{d}}_1, {\bf{d}}_2 + {\bf{d}}_2' ) & = & t'( {\bf{d}}_1,
{\bf{d}}_2 ) t'( {\bf{d}}_1, {\bf{d}}_2' )
\end{eqnarray*}
hold, hence $t'$ induces a homomorphism $\mathbb{Z}^{c_1}
\otimes_{\mathbb{Z}} \mathbb{Z}^{c_2} \xrightarrow{t} k \setminus \{
0 \}$ of abelian groups. Now let $i$ and $j$ be elements of $I$, and
consider the elements $x_i \otimes 1$ and $x_j \otimes 1$ in the
twisted tensor product $A_{{\bf{q}}_1}^{{\bf{a}}_{c_1}} \otimes_k^t
A_{{\bf{q}}_2}^{{\bf{a}}_{c_2}}$. The degrees of $x_i \in
A_{{\bf{q}}_1}^{{\bf{a}}_{c_1}}$ and $x_j \in
A_{{\bf{q}}_1}^{{\bf{a}}_{c_1}}$ are the unit vectors $e_i$ and
$e_j$ in $\mathbb{Z}^{c_1}$, respectively, whereas the degree of $1
\in A_{{\bf{q}}_2}^{{\bf{a}}_{c_2}}$ is the zero vector in
$\mathbb{Z}^{c_2}$. Therefore
\begin{eqnarray*}
(x_i \otimes 1)(x_j \otimes 1) & = & t(e_j|0) x_ix_j \otimes 1 \\
& = & x_ix_j \otimes 1 \\
& = & q_{ij} x_jx_i \otimes 1 \\
& = & q_{ij} t(e_i|0) x_jx_i \otimes 1 \\
& = & q_{ij}(x_j \otimes 1 )(x_i \otimes 1),
\end{eqnarray*}
and similarly $(1 \otimes x_i)(1 \otimes x_j) = q_{ij}(1 \otimes
x_j)(1 \otimes x_i)$ whenever $i$ and $j$ are both in $\{ c_1+1,
\dots, c \}$. If $i \in I$ and $j \in \{ c_1+1, \dots, c \}$, then
the degree of $x_j$ in $A_{{\bf{q}}_2}^{{\bf{a}}_{c_2}}$ is the unit
vector $e_j$ in $\mathbb{Z}^{c_2}$. In this case we obtain the
equalities
$$(x_i \otimes 1)(1 \otimes x_j) = t(0|0)x_i \otimes x_j = x_i
\otimes x_j$$ and
$$(1 \otimes x_j)(x_i \otimes 1) = t(e_i|e_j)x_i \otimes x_j = q_{ji}x_i
\otimes x_j,$$ and consequently $A_{\bf{q}}^{{\bf{a}}_c}$ is
isomorphic to $A_{{\bf{q}}_1}^{{\bf{a}}_{c_1}} \otimes_k^t
A_{{\bf{q}}_2}^{{\bf{a}}_{c_2}}$.
\end{proof}

The final result of this section shows that the cohomology of a
twisted tensor product of graded modules is the tensor product of
the respective cohomologies.

\begin{theorem}\cite[Theorem
3.7]{BerghOppermann}\label{cohomology} Let $A$ and $B$ be abelian
groups, and let $\La$ and $\Ga$ be an $A$-graded and a $B$-graded
$k$-algebra, respectively. Furthermore, let $t \colon A
\otimes_{\mathbb{Z}} B \to k \setminus \{ 0 \}$ be a homomorphism,
and let $M_1,M_2 \in \grmod \La$ and $N_1,N_2 \in \grmod \Ga$ be
graded modules. Then there is an isomorphism
$$\Ext_{\La \otimes_k^t \Ga}^*(M_1 \otimes_k^t N_1,M_2 \otimes_k^t
N_2) \simeq \Ext_{\La}^*(M_1,M_2) \otimes_k
\Ext_{\Ga}^*(N_1,N_2)$$ of graded vector spaces.
\end{theorem}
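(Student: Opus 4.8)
The plan is to build a single projective resolution over $\La\otimes_k^t\Ga$ out of resolutions over $\La$ and $\Ga$, and then read off the two sides of the formula by two applications of the Künneth theorem over the field $k$. Choose graded projective resolutions $(P_\bullet,\partial^P)\xrightarrow{\varepsilon}M_1$ in $\grmod\La$ and $(Q_\bullet,\partial^Q)\xrightarrow{\eta}N_1$ in $\grmod\Ga$ by finitely generated modules, and take them so that all the differentials are homogeneous of internal degree zero; such resolutions always exist, since one may surject a graded free module onto a graded module by sending a basis to a set of homogeneous generators, and iterate. Form the bicomplex of graded $\La\otimes_k^t\Ga$-modules with $(i,j)$-entry $P_i\otimes_k^t Q_j$. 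A direct computation with the twisted multiplication rule shows that, precisely because $\partial^P$ and $\partial^Q$ preserve internal degree, the maps $\partial^P\otimes\mathrm{id}$ and $\mathrm{id}\otimes\partial^Q$ are $\La\otimes_k^t\Ga$-linear; hence the total complex $(T_\bullet,\partial^T)$, formed with the usual Koszul sign, is a complex of graded $\La\otimes_k^t\Ga$-modules, and $\varepsilon\otimes\eta\colon T_0\to M_1\otimes_k^t N_1$ is likewise $\La\otimes_k^t\Ga$-linear.

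The first step is to see that $T_\bullet\to M_1\otimes_k^t N_1$ is a graded projective resolution over $\La\otimes_k^t\Ga$. Each $T_n=\bigoplus_{i+j=n}P_i\otimes_k^t Q_j$ is graded projective, because writing $P_i$ (resp.\ $Q_j$) as a direct summand of a graded free $\La$-module $F$ (resp.\ $\Ga$-module $G$), the module $P_i\otimes_k^t Q_j$ is a direct summand of $F\otimes_k^t G$, and the latter is graded free over $\La\otimes_k^t\Ga$. For exactness, observe that by construction the underlying complex of graded $k$-vector spaces of $T_\bullet$ is literally the total complex of $P_\bullet\otimes_k Q_\bullet$: the twist affected only the module action, not the additive structure or the differential. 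Since $k$ is a field, the classical Künneth theorem gives $\Ho_n(T_\bullet)\simeq\bigoplus_{i+j=n}\Ho_i(P_\bullet)\otimes_k\Ho_j(Q_\bullet)$, with no $\Tor$ term; this is $M_1\otimes_k N_1$ for $n=0$ and vanishes otherwise, so $T_\bullet$ is a resolution.

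It remains to apply $\Hom_{\La\otimes_k^t\Ga}(-,M_2\otimes_k^t N_2)$ and pass to cohomology. The heart of the argument is a natural isomorphism of graded vector spaces
$$\Hom_{\La\otimes_k^t\Ga}(P\otimes_k^t Q,\,M_2\otimes_k^t N_2)\;\simeq\;\Hom_\La(P,M_2)\otimes_k\Hom_\Ga(Q,N_2)$$
for finitely generated graded projectives $P\in\grmod\La$ and $Q\in\grmod\Ga$. It sends a homogeneous pair $(f,g)$ to the map $p\otimes q\mapsto t(\deg f\mid\deg q)\,f(p)\otimes g(q)$; the scalar $t(\deg f\mid\deg q)$ is exactly what is needed to make this $k$-linear map $\La\otimes_k^t\Ga$-linear, and bijectivity follows by additivity from the case $P=\La$, $Q=\Ga$, where both sides are $M_2\otimes_k N_2$ and the map is the identity. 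Because $\partial^P$ and $\partial^Q$ have internal degree zero, this twisting scalar commutes with both induced differentials, so under the isomorphism the complex $\Hom_{\La\otimes_k^t\Ga}(T_\bullet,M_2\otimes_k^t N_2)$ is carried to the ordinary total complex of $\Hom_\La(P_\bullet,M_2)\otimes_k\Hom_\Ga(Q_\bullet,N_2)$. A second application of the Künneth theorem over $k$ then gives
$$\Ext_{\La\otimes_k^t\Ga}^*(M_1\otimes_k^t N_1,\,M_2\otimes_k^t N_2)\;\simeq\;\Ext_\La^*(M_1,M_2)\otimes_k\Ext_\Ga^*(N_1,N_2),$$
and one checks directly that it respects the $A\oplus B$-grading.

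The main obstacle is keeping track of the homomorphism $t$: one must determine exactly where twisting scalars are forced to appear — in the $\La\otimes_k^t\Ga$-module structure on $P_i\otimes_k^t Q_j$ and in the Hom-isomorphism above — and verify that the choice of internal-degree-preserving resolutions makes the differentials carry no twisting scalars at all. Once this is done, both structural identifications are isomorphisms of genuine complexes of $k$-vector spaces, and the two exactness statements reduce to the Künneth theorem over the field $k$, which involves no correction term.
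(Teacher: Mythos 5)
This theorem is not proved in the paper itself; it is quoted with a citation to \cite[Theorem 3.7]{BerghOppermann}, so there is no in-paper proof to compare against. Your argument is correct and follows essentially the same route as the cited reference: form the total complex of $P_\bullet\otimes_k^t Q_\bullet$, check that it is a graded projective resolution of $M_1\otimes_k^t N_1$ via K\"unneth over the field $k$, establish the Hom-isomorphism with the twisting scalar $t(\deg f\mid\deg q)$ for graded projectives, and apply K\"unneth once more. You correctly identify the two places where care is required — that graded resolutions can be chosen with internal-degree-zero differentials (so the twist does not enter the horizontal and vertical differentials) and that the Hom-isomorphism is verified on graded free modules and extended by additivity.
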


\section{$\Ext$-symmetry}

In this section, we prove that symmetry holds in the vanishing of
cohomology for graded modules over a quantum complete
intersection, provided the commutators are all roots of unity. The
idea of the proof is to use twisted tensor products to pass to a
bigger quantum complete intersection where $\Ext$-symmetry holds,
namely a symmetric one. In order to do this, we must determine
precisely when a quantum complete intersection is symmetric.
Throughout this section, we fix a field $k$.

Recall that a finite dimensional $k$-algebra $\La$ is
\emph{Frobenius} if ${_{\La}\La}$ and $D(\La_{\La})$ are isomorphic
as left $\La$-modules, where $D$ denotes the usual $k$-dual
$\Hom_k(-,k)$. If $\La$ and $D( \La )$ are isomorphic as bimodules,
then $\La$ is \emph{symmetric}. Now suppose $\La$ is Frobenius, and
let $\phi \colon _{\La}\La \to D(\La_{\La})$ be an isomorphism. Let
$y \in \La$ be any element, and consider the linear functional
$\phi(1) \cdot y \in D(\La)$, i.e.\ the $k$-linear map $\La \to k$
defined by $\lambda \mapsto \phi(1)(y \lambda)$. Since $\phi$ is
surjective, there is an element $x \in \La$ having the property that
$\phi(x) = \phi(1) \cdot y$, giving $x \cdot \phi(1) = \phi(1) \cdot
y$ since $\phi$ is a map of left $\La$-modules. It is not difficult
to show that the map $y \mapsto x$ defines a $k$-algebra
automorphism on $\La$, and its inverse $\nu$ is called the
\emph{Nakayama automorphism} of $\La$ (with respect to $\phi$). Thus
$\nu$ is defined by $\phi(1)(\lambda x) = \phi(1)( \nu(x) \lambda)$
for all $\lambda \in \La$. This automorphism is unique up to an
inner automorphism; if $\phi' \colon _{\La}\La \to D(\La_{\La})$ is
another isomorphism of left modules yielding a Nakayama automorphism
$\nu'$, then there exists an invertible element $z \in \La$ such
that $\nu = z \nu' z^{-1}$. Note that $\phi$ is an isomorphism
between the bimodules $_1\La_{\nu^{-1}}$ and $D(\La)$. Moreover,
note that $\La$ is symmetric if and only if the Nakayama
automorphism is the identity.

As $D(\La_{\La})$ is an injective left $\La$-module, we see that a
Frobenuis algebra is always left selfinjective, but in fact the
definition is left-right symmetric. For if $\phi \colon _{\La}\La
\to D(\La_{\La})$ is an isomorphism of left $\La$-modules, we can
dualize and get an isomorphism $D( \phi ) \colon D^2( \La_{\La} )
\to D( _{\La}\La )$ of right modules. Composing with the natural
isomorphism $\La_{\La} \simeq D^2( \La_{\La} )$, we obtain an
isomorphism $\La_{\La} \to D( _{\La}\La)$ of right $\La$-modules.

A finite dimensional  local algebra is Frobenius if and only if it
is selfinjective. In particular, a quantum complete intersection
$A_{\bf{q}}^{{\bf{a}}_c}$ is Frobenius, and the following result
shows that there is a particularly nice Nakayama automorphism.

\begin{lemma}\label{nakayama}
A quantum complete intersection $A_{\bf{q}}^{{\bf{a}}_c}$ is
Frobenius, with a Nakayama automorphism $A_{\bf{q}}^{{\bf{a}}_c}
\xrightarrow{\nu} A_{\bf{q}}^{{\bf{a}}_c}$ given by
$$x_w \mapsto \left ( \prod_{i=1}^c q_{iw}^{a_i-1} \right ) x_w$$
for $1 \le w \le c$.
\end{lemma}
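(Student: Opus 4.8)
The plan is to exhibit an explicit Frobenius form $\phi(1)\in D(A_{\bf{q}}^{{\bf{a}}_c})$ and verify that the associated Nakayama automorphism acts as claimed. Write $A = A_{\bf{q}}^{{\bf{a}}_c}$, and recall that $A$ has $k$-basis consisting of the monomials $x^{\bf{b}} \stackrel{\text{def}}{=} x_1^{b_1}\cdots x_c^{b_c}$ with $0\le b_i\le a_i-1$. The \emph{top} monomial is $x^{\bf{t}}$ where ${\bf{t}} = (a_1-1,\dots,a_c-1)$; the subspace $k\cdot x^{\bf{t}}$ is the socle of $A$ (this follows since $A$ is local selfinjective with one-dimensional socle, or can be checked directly using the relations). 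First I would define $\lambda\colon A\to k$ to be the $k$-linear functional picking out the coefficient of $x^{\bf{t}}$, i.e.\ $\lambda(x^{\bf{b}}) = \delta_{{\bf{b}},{\bf{t}}}$, and set $\phi\colon {_A}A\to D(A_A)$ by $\phi(\mu) = \lambda\cdot\mu$ (the functional $\nu\mapsto\lambda(\mu\nu)$). The key point to check is that $\phi$ is an isomorphism: it is clearly a map of left $A$-modules, and since $\dim_k A = \dim_k D(A)$, it suffices to show $\phi$ is injective, equivalently that $\lambda(\mu A)\ne 0$ for every nonzero $\mu$. For this, pick the lowest-degree monomial $x^{\bf{b}}$ appearing in $\mu$ with nonzero coefficient and multiply by $x^{{\bf{t}}-{\bf{b}}}$: using the commutation relations $x_ix_j = q_{ij}x_jx_i$ one reorders $x^{\bf{b}}\cdot x^{{\bf{t}}-{\bf{b}}}$ into a nonzero scalar multiple of $x^{\bf{t}}$, while all other monomials of $\mu$ (being of degree not below ${\bf{b}}$ and different from ${\bf{b}}$) contribute terms that either vanish by the truncation relations $x_i^{a_i}=0$ or land outside $k\cdot x^{\bf{t}}$; hence the coefficient of $x^{\bf{t}}$ in $\mu\cdot x^{{\bf{t}}-{\bf{b}}}$ is nonzero.

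Having established that $A$ is Frobenius with this $\phi$, the remaining task is the computation of $\nu$. By definition $\nu$ satisfies $\lambda(\mu x_w) = \lambda(\nu(x_w)\mu)$ for all $\mu\in A$. Since it suffices to determine $\nu$ on the algebra generators $x_w$, and since both sides are linear in $\mu$, I would test this on basis monomials $\mu = x^{\bf{b}}$. The left side $\lambda(x^{\bf{b}}x_w)$ is nonzero precisely when $x^{\bf{b}}x_w$ is a scalar multiple of $x^{\bf{t}}$, which forces $b_w = a_w-2$ and $b_i = a_i-1$ for $i\ne w$; write ${\bf{b}} = {\bf{t}} - e_w$ for this unique choice. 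To evaluate $x^{{\bf{t}}-e_w}\cdot x_w$ I must commute the trailing $x_w$ leftward past $x_{w+1}^{a_{w+1}-1}\cdots x_c^{a_c-1}$, which produces the scalar $\prod_{i>w} q_{iw}^{a_i-1}$, giving $\lambda(x^{{\bf{t}}-e_w}x_w) = \prod_{i>w}q_{iw}^{a_i-1}$. On the right side, $\nu(x_w) = \alpha_w x_w$ for some scalar $\alpha_w$ (degree reasons: $\nu$ is graded since $\phi$ is, so $\nu$ preserves the $\mathbb{Z}^c$-grading and each $x_w$ spans its graded piece in degree $e_w$ up to the relations), so $\lambda(\nu(x_w)\mu) = \alpha_w\lambda(x_w x^{\bf{b}})$; this is nonzero for the same monomial ${\bf{b}} = {\bf{t}}-e_w$, and commuting the leading $x_w$ rightward past $x_1^{a_1-1}\cdots x_{w-1}^{a_w-1}$ yields $\lambda(x_w x^{{\bf{t}}-e_w}) = \prod_{i<w}q_{wi}^{a_i-1}$. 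Equating the two sides gives $\alpha_w = \prod_{i<w}q_{wi}^{-(a_i-1)}\prod_{i>w}q_{iw}^{a_i-1}$, and using $q_{wi}^{-1} = q_{iw}$ this collapses to $\alpha_w = \prod_{i\ne w}q_{iw}^{a_i-1} = \prod_{i=1}^c q_{iw}^{a_i-1}$ (the $i=w$ factor being $q_{ww}^{a_w-1} = 1$), which is exactly the claimed formula.

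The main obstacle I anticipate is purely bookkeeping: carefully tracking the $q$-scalars that arise when reordering the monomials $x^{{\bf{t}}-e_w}x_w$ and $x_w x^{{\bf{t}}-e_w}$ into $x^{\bf{t}}$, and confirming that no other basis monomials $x^{\bf{b}}$ contribute (i.e.\ that the pairing $\mu\mapsto\lambda(\mu\,\cdot\,)$ really is perfect). Both of these are routine given the relations $x_i^{a_i}=0$ and $x_ix_j = q_{ij}x_jx_i$, so the argument should go through cleanly; one could alternatively invoke Lemma~\ref{twisted} to reduce to the rank-one case $c=1$ (where $A = k[X]/(X^a)$ is symmetric, $\nu = \mathrm{id}$, matching $q_{11}^{a_1-1}=1$) and then propagate $\nu$ through the twisted tensor product, but the direct computation above is self-contained.
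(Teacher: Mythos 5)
Your proposal is correct and follows essentially the same approach as the paper: both exhibit the Frobenius form $\phi(1)$ that picks out the coefficient of the top socle monomial and read off $\nu$ from the relation $\phi(1)(\mu x_w)=\phi(1)(\nu(x_w)\mu)$, with your version simply supplying the $q$-bookkeeping and the injectivity check that the paper asserts without detail (and using the socle monomial in the opposite order, which differs only by a nonzero scalar and hence gives the same $\nu$). One small imprecision: the ``lowest-degree monomial'' step is both ill-defined (the partial order on $\mathbb{Z}^c$ has no unique minimum, and monomials of $\mu$ need not all lie above a chosen one) and unnecessary --- for \emph{any} $\bf{b}$ with $\alpha_{\bf{b}}\neq 0$, multiplying by $x^{{\bf{t}}-{\bf{b}}}$ already isolates the $x^{\bf{b}}$ term because the $\mathbb{Z}^c$-grading separates all the basis monomials.
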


\begin{proof}
Consider the map $A_{\bf{q}}^{{\bf{a}}_c} \xrightarrow{\phi} D (
A_{\bf{q}}^{{\bf{a}}_c} )$ defined by
$$\phi (1) \colon \sum_{i_1, \dots, i_c} \alpha_{i_1, \dots, i_c} x_c^{i_c} \cdots x_1^{i_1} \mapsto \alpha_{a_1-1, \dots,
a_c-1}.$$ That is, the element $\phi (1)$ maps an element $y \in
A_{\bf{q}}^{{\bf{a}}_c}$ to the coefficient of the socle element
$x_c^{a_c-1} \cdots x_1^{a_1-1}$ in $y$. This is an isomorphism of
left $A_{\bf{q}}^{{\bf{a}}_c}$-modules. By definition, a Nakayama
automorphism $A_{\bf{q}}^{{\bf{a}}_c} \xrightarrow{\nu}
A_{\bf{q}}^{{\bf{a}}_c}$ has the property that $y \cdot \phi (1) =
\phi (1) \cdot \nu (y)$ for all $y \in A_{\bf{q}}^{{\bf{a}}_c}$. The
given map satisfies this property.
\end{proof}

Thus quantum complete intersections are not symmetric in general.
However, the following result shows that for every such algebra,
there exists a symmetric quantum complete intersection
``extending" the given one.

\begin{proposition}\label{symmetric}
Given any quantum complete intersection $A_{\bf{q}}^{{\bf{a}}_c}$,
there exists a symmetric quantum complete intersection
$A_{\bf{q}'}^{{\bf{a}}_{2c}}$ with the following properties:
\begin{itemize}
\item[(i)] The subalgebra of $A_{\bf{q}'}^{{\bf{a}}_{2c}}$ generated
by $x_1, \dots, x_c$ is isomorphic to $A_{\bf{q}}^{{\bf{a}}_c}$.
\item[(ii)] The commutators of $A_{\bf{q}'}^{{\bf{a}}_{2c}}$ are the
commutators of $A_{\bf{q}}^{{\bf{a}}_c}$.
\end{itemize}
\end{proposition}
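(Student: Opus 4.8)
The plan is to build $A_{\bf{q}'}^{{\bf{a}}_{2c}}$ by adjoining, for each original generator $x_w$, a new generator $x_{c+w}$ with the \emph{same} exponent $a_w$, and to choose the new commutators $q'_{ij}$ so that the Nakayama automorphism computed via Lemma~\ref{nakayama} becomes the identity. Concretely, set ${\bf{a}}_{2c} = (a_1, \dots, a_c, a_1, \dots, a_c)$, and for indices $i,j \le c$ keep $q'_{ij} = q_{ij}$ (this forces property (i), and together with the next choices, property (ii)). For the mixed blocks, one expects to take $q'_{i,c+j}$ for $i,j \le c$ to be certain monomials in the $q_{k\ell}$ designed to cancel the factor $\prod_{i=1}^c q_{iw}^{a_i-1}$ that appears in the Nakayama automorphism of $A_{\bf{q}}^{{\bf{a}}_c}$; and one would set the $q'_{c+i,c+j}$ (for $i,j \le c$) to make the second copy behave symmetrically with the first. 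The key computation is then: for the enlarged matrix ${\bf{q}}'$, the Nakayama exponent attached to the generator $x_w$ is $\prod_{m=1}^{2c} (q'_{mw})^{a'_m - 1}$, and one must verify that with the chosen $q'$ this product equals $1$ for every $w \in \{1, \dots, 2c\}$; by Lemma~\ref{nakayama} (and the remark that $\La$ is symmetric iff its Nakayama automorphism is the identity) this yields symmetry.

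To make the cancellation transparent, I would first record the identity $q_{ij}q_{ji}=1$ repeatedly to rewrite the target exponent $\prod_{i=1}^c q_{iw}^{a_i-1}$ in a convenient form, then choose $q'_{w,c+w}$ (the commutator between a generator and its ``partner'') to absorb precisely this factor — for instance declaring $x_w$ and $x_{c+w}$ to $q$-commute with a scalar equal to $\prod_{i=1}^c q_{iw}^{a_i-1}$ (or its inverse, depending on the sign conventions in the exponent), while letting $x_w$ and $x_{c+j}$ commute strictly ($q'_{w,c+j}=1$) when $w \neq j$. One then checks that with these choices the Nakayama exponent of $x_w$ picks up exactly one nontrivial contribution, namely $(q'_{c+w,w})^{a_w-1}$, which by design cancels the original contribution from the $x_i$'s; and symmetrically, the Nakayama exponent of $x_{c+w}$ is arranged to vanish by the interaction with $x_w$. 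Verifying that ${\bf{q}}'$ is a legitimate commutation matrix — diagonal entries $1$ and $q'_{ij}q'_{ji}=1$ — is routine once the off-diagonal entries are specified only in the upper mixed block and extended by the reciprocal relation.

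The main obstacle I anticipate is \emph{bookkeeping consistency}: one must simultaneously satisfy three constraints — (a) the restriction to $x_1,\dots,x_c$ reproduces $A_{\bf{q}}^{{\bf{a}}_c}$ on the nose (property (i)), (b) no new commutator values are introduced beyond those already present (property (ii)), and (c) the full $2c \times 2c$ Nakayama exponent vector is identically trivial. Constraint (b) is the subtle one, because the natural cancelling scalars $\prod_{i=1}^c q_{iw}^{a_i-1}$ are themselves products of the original commutators, hence lie in the multiplicative subgroup of $k^\times$ generated by the $q_{ij}$; one must check that these products do not accidentally equal values outside the original commutator set — or, more likely, the statement of (ii) should be (and I would read it as) asserting that the commutators appearing are monomials in, equivalently lie in the subgroup generated by, the original ones, and I would phrase the verification accordingly. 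Once the entries of ${\bf{q}}'$ are pinned down, everything reduces to a finite check with the relations $q_{ij}q_{ji}=1$, so the remaining work is mechanical.
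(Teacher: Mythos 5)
Your proposal gets the shape right — double the generator set with ${\bf{a}}_{2c}=(a_1,\dots,a_c,a_1,\dots,a_c)$, keep ${\bf{q}}$ as the upper-left block so that (i) holds, and invoke Lemma~\ref{nakayama} — but the off-diagonal construction you sketch does not work, and it also leads you to water down property~(ii) unnecessarily.

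The concrete flaw: you try to concentrate all the cancellation into the single new commutator $q'_{w,c+w}$, chosen to be $\prod_{i=1}^c q_{iw}^{a_i-1}$ or its inverse, with $q'_{w,c+j}=1$ for $j\neq w$. But in the Nakayama exponent $\prod_{m=1}^{2c}(q'_{mw})^{a'_m-1}$ the factor contributed by $m=c+w$ is $(q'_{c+w,w})^{a_w-1}$, raised to the power $a_w-1$, not to the power $1$. To cancel $\prod_{i=1}^c q_{iw}^{a_i-1}$ you would therefore need to solve $(q'_{c+w,w})^{a_w-1}=\prod_i q_{iw}^{-(a_i-1)}$, i.e.\ extract an $(a_w-1)$-th root in $k$, which need not exist. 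Worse, once you also impose that the Nakayama exponent at $x_{c+w}$ be trivial, the two constraints on $q'_{w,c+w}$ are compatible only when $\bigl(\prod_i q_{iw}^{a_i-1}\bigr)^2=1$, which fails in general. Your instinct that (ii) should only assert that the new commutators lie in the subgroup generated by the $q_{ij}$ is symptomatic of this wrong choice; with the right construction, (ii) holds literally.

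The paper's construction spreads the cancellation out pairwise instead of concentrating it: take
$${\bf{q}'}=\left(\begin{array}{cc}{\bf{q}} & {\bf{q}}^{\T}\\ {\bf{q}}^{\T} & {\bf{q}}\end{array}\right),$$
so that every new commutator is some $q_{uv}$ itself (giving (ii) exactly), and for each $i\le c$ and each $w$ one has $q'_{iw}q'_{(c+i)w}=1$ on the nose. Since $a'_{c+i}=a_i$, the two factors $(q'_{iw})^{a_i-1}$ and $(q'_{(c+i)w})^{a_i-1}$ carry the \emph{same} exponent and cancel directly, so $\prod_{m=1}^{2c}(q'_{mw})^{a'_m-1}=1$ with no root extraction and no case analysis. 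I would suggest replacing your mixed-block ansatz with this transpose trick; once you do, the Nakayama computation from Lemma~\ref{nakayama} collapses to the observation $q_{iw}q_{wi}=1$ and the proof closes cleanly.
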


\begin{proof}
Suppose $A_{\bf{q}}^{{\bf{a}}_c}$ is given by the sequence
${\bf{a}}_c =(a_1, \dots, a_c)$ and the commutation matrix
$${\bf{q}} = \left (
\begin{array}{ccc}
q_{11} & \cdots & q_{1c} \\
\vdots & \ddots & \vdots \\
q_{c1} & \cdots & q_{cc}
\end{array}
\right )$$ in which $q_{ij}q_{ji}=1$ and $q_{ii}=1$ for all $i,j$.
Define a sequence ${\bf{a}}_{2c}$ and a $2c \times 2c$ commutation
matrix ${\bf{q}'}$ by
\begin{eqnarray*}
{\bf{a}}_{2c} & \stackrel{\text{def}}{=} & (a_1, \dots, a_c,a_1,
\dots, a_c) \\
{\bf{q}'} & \stackrel{\text{def}}{=} & \left (
\begin{array}{cc}
{\bf{q}} & {\bf{q}}^{\T} \\
{\bf{q}}^{\T} & {\bf{q}}
\end{array}
\right )
\end{eqnarray*}
where $Q^{\T}$ denotes the transpose of a matrix. Then
$A_{\bf{q}}^{{\bf{a}}_c}$ is isomorphic to the subalgebra of the
quantum complete intersection $A_{\bf{q}'}^{{\bf{a}}_{2c}}$
generated by $x_1, \dots, x_c$. The commutators $q_{uv}'$ in
$A_{\bf{q}'}^{{\bf{a}}_{2c}}$ satisfy
$$q_{uv}' = \left \{
\begin{array}{ll}
q_{uv} & \text{ if } u \le c, v \le c \\
q_{vi} & \text{ if } u=c+i, v \le c \\
q_{ju} & \text{ if } u \le c, v=c+j \\
q_{ij} & \text{ if } u=c+i, v=c+j,
\end{array}
\right.$$ and so if $i$ and $w$ are integers with $1 \le i \le c$
and $1 \le w \le 2c$, we see that $q'_{iw}q'_{(c+i)w}=1$. This gives
$$\prod_{i=1}^{2c}(q_{iw}')^{a_i-1} = \left (
\prod_{i=1}^{c}(q_{iw}')^{a_i-1} \right ) \left (
\prod_{i=1}^{c}(q_{(c+i)w}')^{a_i-1} \right ) =1$$ for $1 \le w \le
2c$, and therefore $A_{\bf{q}'}^{{\bf{a}}_{2c}}$ is symmetric by
Lemma \ref{nakayama}.
\end{proof}

As mentioned, the crucial step when proving $\Ext$-symmetry for
graded modules over a quantum complete intersection is the passing
to a bigger symmetric algebra, as in the previous result. Namely,
the next result shows that $\Ext$-symmetry holds for \emph{all}
modules over a symmetric quantum complete intersection whose
commutators are all roots of unity. Recall first the following;
details can be found in \cite{SnashallSolberg} and \cite{Solberg}.
Let $\La$ a finite dimensional $k$-algebra, and denote the
enveloping algebra $\La \otimes_k \La^{\op}$ of $\La$ by $\Lae$. For
$n \ge 0$, the $n$th \emph{Hochschild cohomology} group of $\La$,
denoted $\HH^n ( \La )$, is the vector space $\Ext_{\Lae}^n( \La,
\La )$. The graded vector space $\HH^* ( \La ) = \Ext_{\Lae}^*( \La,
\La )$ is a graded-commutative ring with Yoneda product, and for
every $M \in \mod \La$ the tensor product $- \otimes_{\La} M$
induces a homomorphism
$$\HH^* ( \La ) \xrightarrow{\varphi_M} \Ext_{\La}^*(M,M)$$
of graded $k$-algebras. If $N \in \mod \La$ is another module and
$\eta \in \HH^* ( \La )$ and $\theta \in \Ext_{\La}^*(M,N)$ are
homogeneous elements, then the relation $\varphi_N( \eta ) \circ
\theta = (-1)^{|\eta||\theta|} \theta \circ \varphi_M( \eta )$
holds, where ``$\circ$" denotes the Yoneda product. In the
terminology used in \cite{Bergh} and \cite{BIKO}, the Hochschild
cohomology ring $\HH^* ( \La )$ acts centrally on the bounded
derived category $D^b( \La )$ of $\mod \La$.

\begin{theorem}\label{symmetrysymmetric}
Let $A_{\bf{q}}^{{\bf{a}}_c}$ be a symmetric quantum complete
intersection whose commutators $q_{ij}$ are all roots of unity.
Then for all modules $M,N \in \mod A_{\bf{q}}^{{\bf{a}}_c}$, the
following are equivalent:
\begin{enumerate}
\item[(i)] $\Ext_{A_{\bf{q}}^{{\bf{a}}_c}}^i(M,N)=0$ for $i \gg
0$. \item[(ii)] $\Ext_{A_{\bf{q}}^{{\bf{a}}_c}}^i(M,N)=0$ for $i
>0$. \item[(iii)] $\Ext_{A_{\bf{q}}^{{\bf{a}}_c}}^i(N,M)=0$ for $i \gg 0$. \item[(iv)]
$\Ext_{A_{\bf{q}}^{{\bf{a}}_c}}^i(N,M)=0$ for $i > 0$.
\end{enumerate}
\end{theorem}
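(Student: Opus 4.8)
The plan is to prove the theorem by establishing the cycle of implications (ii) $\Rightarrow$ (i) $\Rightarrow$ (iii) $\Rightarrow$ (iv) $\Rightarrow$ (i), where the only nontrivial arrows are (i) $\Rightarrow$ (iii) and (i) $\Rightarrow$ (ii) (the latter following from (iii) $\Rightarrow$ (iv) applied with the roles of $M$ and $N$ swapped, then symmetry). The implications (ii) $\Rightarrow$ (i) and (iv) $\Rightarrow$ (iii) are trivial, and (iv) $\Rightarrow$ (i) is trivial after swapping $M$ and $N$; so really the heart of the matter is a single symmetry statement together with a ``vanishing for $i \gg 0$ implies vanishing for $i > 0$'' rigidity statement. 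Both of these I would obtain from a theory of support varieties defined via the Hochschild cohomology ring, exactly analogously to the commutative complete intersection case of Avramov--Buchweitz.

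First I would recall (or cite from \cite{BerghOppermann}, \cite{Bergh}, \cite{BIKO}) the relevant structure of $\HH^*(A_{\bf q}^{{\bf a}_c})$: when all commutators are roots of unity, there is a graded subalgebra $H$ of $\HH^*(A_{\bf q}^{{\bf a}_c})$, concentrated in even degrees, that is a finitely generated commutative $k$-algebra (a polynomial ring in $c$ variables modulo nilpotents, in the generic situation), over which $\Ext_{A_{\bf q}^{{\bf a}_c}}^*(M,N)$ is a finitely generated module for every pair $M,N$. Using the central action of $\HH^*$ on $D^b(A_{\bf q}^{{\bf a}_c})$ recalled just before the theorem, I would define $\V(M,N)$ to be the support in $\Maxspec H$ of $\Ext_{A_{\bf q}^{{\bf a}_c}}^*(M,N)$ as an $H$-module, and $\V(M) = \V(M,M)$. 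The two properties I need are: (a) $\V(M,N) = \emptyset$ if and only if $\Ext_{A_{\bf q}^{{\bf a}_c}}^i(M,N) = 0$ for $i \gg 0$; and (b) $\V(M,N) = \V(M) \cap \V(N)$. Property (a) is immediate from finite generation over the Noetherian graded ring $H$. Property (b) is where symmetry of the algebra enters: for a \emph{symmetric} algebra one has a duality $\Ext_{\La}^i(M,N) \cong D\Ext_{\La}^i(N, M)$ up to a syzygy shift (more precisely $\Ext^i_\La(M,N)\cong D\,\underline{\Hom}_\La(N,\Omega^i M)$ and the symmetric hypothesis makes the relevant bimodule twist trivial), which already forces $\V(M,N) = \V(N,M)$ and hence the equivalence of (i) and (iii); combined with the standard ``reduction to diagonal'' argument (tensoring over $\La$, using that $\La$ is a quantum complete intersection so that $\La^{\e}$ is again one and $\La$ has a nice periodic-up-to-polynomial-growth bimodule resolution) one gets the containment $\V(M)\cap\V(N)\subseteq\V(M,N)$; the reverse containment is formal.

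For the rigidity statement (i) $\Rightarrow$ (ii), I would argue that over a symmetric quantum complete intersection with root-of-unity commutators every module is eventually $\Omega$-periodic after passing to a suitable Veronese, or more directly: the minimal projective resolution of any $M$ has the property that $\Ext^{\geq 1}$ is generated as an $H$-module in degree... — actually the cleanest route is to use that such an algebra is a (twisted) complete intersection in the sense that it is a quotient of a regular-enough algebra, so $\Ext^*(M,N)$ is not merely finitely generated over $H$ but its generators sit in a bounded range of degrees determined only by $M$; since $\V(M,N)=\emptyset$ means $\Ext^*(M,N)$ is a finite-dimensional $H$-module, and finite-dimensional finitely generated graded modules over the positively graded ring $H$ are concentrated in finitely many degrees, one deduces $\Ext^i(M,N) = 0$ for $i$ above some uniform bound; the passage from ``for $i\gg 0$'' to ``for $i>0$'' then requires a Tate/complete-cohomology comparison or an explicit analysis of low-degree syzygies, which for these particular algebras one can carry out because the bimodule resolution is known explicitly. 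I expect \textbf{the main obstacle} to be property (b), $\V(M,N) = \V(M) \cap \V(N)$: establishing the nontrivial inclusion $\V(M) \cap \V(N) \subseteq \V(M,N)$ is exactly the technical core of the Avramov--Buchweitz theorem in the commutative case, and here one must redo it in the noncommutative twisted setting, which is precisely where Theorem \ref{cohomology} and the explicit description of the Hochschild cohomology ring of a quantum complete intersection do the work — so I would lean heavily on \cite{BerghOppermann} and \cite{BIKO} for this step rather than reproving it from scratch.
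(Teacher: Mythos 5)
The paper's own proof is a two-line citation: by \cite[Theorem 5.5]{BerghOppermann} the Hochschild cohomology ring $\HH^*(A_{\bf q}^{{\bf a}_c})$ is Noetherian and $\Ext^*_{A_{\bf q}^{{\bf a}_c}}(k,k)$ is a finitely generated module over it (the so-called Fg condition), and then \cite[Theorem 4.2]{Bergh} delivers the full four-way equivalence for any symmetric algebra satisfying Fg. Your proposal is not a different route but an attempt to unwind the proof of \cite[Theorem 4.2]{Bergh}: you correctly identify that the ingredients are (1) finite generation of $\Ext^*(M,N)$ over the Noetherian Hochschild ring, giving a support theory $\V(M,N)$ with $\V(M,N)=\emptyset \Leftrightarrow \Ext^{\gg 0}(M,N)=0$, and (2) a duality for symmetric algebras relating $\Ext^*(M,N)$ and $\Ext^*(N,M)$. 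That is indeed the content behind the cited theorem, so you are on the same path, just re-deriving what the paper treats as a black box.

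That said, your reconstruction has two soft spots. First, you spend most of your effort worrying about $\V(M,N)=\V(M)\cap\V(N)$ and flag it as ``the main obstacle,'' but that intersection formula is not needed for this theorem: once you have the symmetric-algebra duality $\Ext^i_\La(M,N)\cong D\Ext^i_\La(N,M)$ (a consequence of Auslander--Reiten/Tate duality plus the trivial Nakayama automorphism), you already get $\V(M,N)=\V(N,M)$ directly, hence (i)$\Leftrightarrow$(iii). The intersection formula is the extra strength Avramov--Buchweitz prove in the commutative case, but it plays no role here, so treating it as the bottleneck is a misdirection of effort. Second, your rigidity step (passing from ``$\Ext^i=0$ for $i\gg 0$'' to ``$\Ext^i=0$ for $i>0$'') is essentially circular as written: observing that $\V(M,N)=\emptyset$ forces $\Ext^*(M,N)$ to be concentrated in finitely many degrees is just property (a) restated, not an argument for vanishing in all positive degrees. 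The genuine content is the comparison with Tate cohomology over the self-injective algebra, where the duality $\widehat{\Ext}^n(M,N)\cong D\widehat{\Ext}^{-n-1}(N,M)$ for symmetric $\La$ combined with finite generation of $\widehat{\Ext}^*(M,N)$ over $\HH^*$ forces the whole complete cohomology to vanish; you gesture at this (``Tate/complete-cohomology comparison'') but do not actually carry it out, and this is precisely what \cite[Theorem 4.2]{Bergh} supplies. So the proposal captures the correct idea and correctly locates where symmetry and Fg enter, but the rigidity implication is left as a gap and the $\V(M,N)=\V(M)\cap\V(N)$ discussion is an unnecessary detour.
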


\begin{proof}
By \cite[Theorem 5.5]{BerghOppermann}, the Hochschild cohomology
ring $\HH^* ( A_{\bf{q}}^{{\bf{a}}_c} )$ is Noetherian, and
$\Ext_{A_{\bf{q}}^{{\bf{a}}_c}}^i(k,k)$ is a finitely generated
$\HH^* ( A_{\bf{q}}^{{\bf{a}}_c} )$-module. The result now follows
from \cite[Theorem 4.2]{Bergh}.
\end{proof}

\begin{examples}
(i) Let $A$ be the exterior algebra on a $c$-dimensional $k$-vector
space, i.e.\
$$A = k \langle X_1, \dots, X_c \rangle /(X_i^2, \{ X_iX_j+X_jX_i
\}_{i \neq j} ).$$ From Lemma \ref{nakayama}, we see that when
applying the Nakayama automorphism to a generator $x_i$, then the
result is the element $(-1)^{c-1} x_i$. Therefore $A$ is symmetric
precisely when $c$ is an odd number. Consequently, symmetry in the
vanishing of cohomology holds for modules over exterior algebras on
odd-dimensional vector spaces (cf.\ \cite[Corollary 4.9]{Mori}).

(ii) Fix integers $c$ and $a$, both at least two, and let $q$ be an
element in $k$ with the property that $q^{a-1}=1$. Furthermore, let
${\bf{a}}_c$ be the $c$-tuple $(a, \dots, a)$, let ${\bf{q}}$ be the
commutation matrix
$$\left (
\begin{array}{ccccc}
1 & q & \cdots & \cdots & q \\
q^{-1} & 1 & q & \cdots & q \\
\vdots & \ddots & \ddots & \ddots & \vdots \\
q^{-1} & \cdots & q^{-1} & 1 & q \\
q^{-1} & \cdots & \cdots & q^{-1} & 1
\end{array}
\right )$$ and consider the quantum complete intersection
$A_{\bf{q}}^{{\bf{a}}_c}$. Explicitly, this is the algebra
$$k \langle X_1, \dots, X_c \rangle /(X_i^a, \{ X_iX_j-qX_jX_i \}_{i<j} ),$$
and from Lemma \ref{nakayama} we see that it is symmetric. Thus
$\Ext$-symmetry holds for all modules over
$A_{\bf{q}}^{{\bf{a}}_c}$.
\end{examples}

Using Theorem \ref{symmetrysymmetric}, we now show that
$\Ext$-symmetry holds for all \emph{graded} modules over an
arbitrary quantum complete intersection whose commutators are all
roots of unity.

\begin{theorem}\label{symmetrygraded}
Let $A_{\bf{q}}^{{\bf{a}}_c}$ be a quantum complete intersection
whose commutators $q_{ij}$ are all roots of unity. Then for all
modules $M,N \in \grmod A_{\bf{q}}^{{\bf{a}}_c}$, the following are
equivalent:
\begin{enumerate}
\item[(i)] $\Ext_{A_{\bf{q}}^{{\bf{a}}_c}}^i(M,N)=0$ for $i \gg
0$. \item[(ii)] $\Ext_{A_{\bf{q}}^{{\bf{a}}_c}}^i(M,N)=0$ for $i
>0$. \item[(iii)] $\Ext_{A_{\bf{q}}^{{\bf{a}}_c}}^i(N,M)=0$ for $i \gg 0$. \item[(iv)]
$\Ext_{A_{\bf{q}}^{{\bf{a}}_c}}^i(N,M)=0$ for $i > 0$.
\end{enumerate}
\end{theorem}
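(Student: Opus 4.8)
The plan is to reduce the statement for graded modules over an arbitrary quantum complete intersection to the already-proved symmetric case (Theorem \ref{symmetrysymmetric}) by ``extending'' the algebra as in Proposition \ref{symmetric}. Write $\La = A_{\bf{q}}^{{\bf{a}}_c}$ and let $\Ga = A_{{\bf{q}}'}^{{\bf{a}}_{2c}}$ be the symmetric quantum complete intersection supplied by Proposition \ref{symmetric}, so that $\La$ is the subalgebra of $\Ga$ generated by $x_1, \dots, x_c$ and the commutators of $\Ga$ are those of $\La$ (in particular they are all roots of unity). The key point is that, by Lemma \ref{twisted} applied to $\Ga$ with $I = \{1, \dots, c\}$, we have a $\mathbb{Z}^c \otimes_{\mathbb{Z}} \mathbb{Z}^c$-homomorphism $t$ with $\Ga \simeq \La \otimes_k^t \La'$, where $\La' = A_{{\bf{q}}_2}^{{\bf{a}}_{c}}$ is the quantum complete intersection on the remaining $c$ generators; by construction of ${\bf{q}}'$ this $\La'$ is again isomorphic to $\La$ (the lower-right block of ${\bf{q}}'$ is ${\bf{q}}$), but all we really need is that $\Ga$ is a twisted tensor product with $\La$ as its first tensor factor.

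Next I would pass graded $\La$-modules to graded $\Ga$-modules. Given $M, N \in \grmod \La$, form $M \otimes_k^t k$ and $N \otimes_k^t k$, where $k$ is the trivial (graded, concentrated in degree $0$) $\La'$-module; these are graded $\Ga$-modules via the twisted tensor product construction recalled in Section \ref{sec2}. Since $\Ga$ is finite dimensional and these modules are finite dimensional, they lie in $\mod \Ga$. Now apply Theorem \ref{cohomology}: there is an isomorphism of graded vector spaces
\begin{equation*}
\Ext_{\Ga}^*(M \otimes_k^t k, N \otimes_k^t k) \simeq \Ext_{\La}^*(M,N) \otimes_k \Ext_{\La'}^*(k,k).
\end{equation*}
Because $\La'$ is a (nontrivial) local finite dimensional selfinjective algebra, $\Ext_{\La'}^*(k,k)$ is nonzero in every cohomological degree — indeed $\Ext_{\La'}^0(k,k) = k$ and the higher groups are nonzero since $k$ has infinite projective dimension over $\La'$. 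Hence $\Ext_{\Ga}^i(M \otimes_k^t k, N \otimes_k^t k) = 0$ if and only if $\Ext_{\La}^i(M,N) = 0$, for every single $i \ge 1$, and likewise ``for $i \gg 0$'' on one side corresponds to ``for $i \gg 0$'' on the other.

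With this dictionary in hand the proof closes quickly. The four conditions (i)--(iv) for the pair $(M,N)$ over $\La$ translate, via the displayed isomorphism, into the corresponding four conditions for the pair $(M \otimes_k^t k, N \otimes_k^t k)$ over $\Ga$; for instance $\Ext_{\La}^i(M,N) = 0$ for $i \gg 0$ is equivalent to $\Ext_{\Ga}^i(M \otimes_k^t k, N \otimes_k^t k) = 0$ for $i \gg 0$, and similarly with $i > 0$ and with the roles of $M$ and $N$ reversed. Since $\Ga$ is a symmetric quantum complete intersection whose commutators are all roots of unity, Theorem \ref{symmetrysymmetric} asserts that these four conditions over $\Ga$ are equivalent, and transporting back along the isomorphism yields the equivalence of (i)--(iv) over $\La$.

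I expect the main obstacle to be purely bookkeeping rather than conceptual: one must verify carefully that the algebra $\Ga$ produced by Proposition \ref{symmetric} really is a twisted tensor product with $\La$ sitting as the left-hand factor in the precise sense needed to invoke Theorem \ref{cohomology} (this is exactly what Lemma \ref{twisted} gives, provided its hypotheses $c \ge 2$ — here $2c \ge 2$ — are met), and that the trivial module $k$ over the complementary factor $\La'$ has nonvanishing self-extensions in all degrees, so that tensoring with $\Ext_{\La'}^*(k,k)$ neither creates nor destroys vanishing in any degree. The degenerate case $c = 0$, if it needs to be addressed at all, is trivial since then $\La = k$ and all higher Ext groups vanish; the case $c = 1$ is covered because $2c = 2 \ge 2$, so Lemma \ref{twisted} applies to $\Ga$.
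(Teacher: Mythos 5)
Your overall reduction strategy matches the paper's exactly: pass to the symmetric extension $\Ga = A_{\bf{q}'}^{{\bf{a}}_{2c}}$ via Proposition~\ref{symmetric}, identify it as a twisted tensor product $\La \otimes_k^t \La'$ via Lemma~\ref{twisted}, transport $M$ and $N$ to graded $\Ga$-modules, invoke Theorem~\ref{cohomology}, and conclude with Theorem~\ref{symmetrysymmetric}. The difference is your choice of the second tensor factor, and that choice introduces a genuine error.

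You tensor $M$ and $N$ with the simple module $k$ over $\La'$, getting
$\Ext_{\Ga}^*(M \otimes_k^t k, N \otimes_k^t k) \simeq \Ext_{\La}^*(M,N) \otimes_k \Ext_{\La'}^*(k,k)$,
and then assert that $\Ext_{\Ga}^i = 0$ iff $\Ext_{\La}^i(M,N) = 0$ for the same $i$. That reading of the isomorphism is wrong: the tensor product of two graded vector spaces has degree-$n$ piece $\bigoplus_{p+q=n} \Ext_{\La}^p(M,N) \otimes_k \Ext_{\La'}^q(k,k)$, so the cohomological degrees mix. Since $\La'$ is a non-semisimple selfinjective local algebra, $\Ext_{\La'}^q(k,k) \neq 0$ for every $q \geq 0$; hence as soon as $\Hom_{\La}(M,N) \neq 0$ (which is the typical situation, e.g.\ $M = N \neq 0$), the summand $\Ext_{\La}^0(M,N) \otimes \Ext_{\La'}^n(k,k)$ is nonzero for every $n$, so $\Ext_{\Ga}^n(M \otimes k, N \otimes k)$ never vanishes, regardless of whether $\Ext_{\La}^i(M,N)$ eventually vanishes. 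The dictionary you wanted simply does not hold with this choice. The paper avoids this by tensoring instead with $\La'$ itself, i.e.\ using $M \otimes_k^t A_{\bf{p}}^{{\bf{b}}_c}$ and $N \otimes_k^t A_{\bf{p}}^{{\bf{b}}_c}$: then $\Ext_{\La'}^*(\La',\La')$ is concentrated in degree $0$ (equal to $\La'$), so the tensor-product grading degenerates to $\Ext_{\Ga}^n \simeq \Ext_{\La}^n(M,N) \otimes_k \La'$, which does vanish in a given degree precisely when $\Ext_{\La}^n(M,N)$ does. Replace $k$ by $A_{\bf{p}}^{{\bf{b}}_c}$ throughout and the rest of your argument goes through.
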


\begin{proof}
Let $A_{\bf{q}'}^{{\bf{a}}_{2c}}$ be a symmetric quantum complete
intersection with the properties given in Proposition
\ref{symmetric}. That is, the subalgebra of
$A_{\bf{q}'}^{{\bf{a}}_{2c}}$ generated by $x_1, \dots, x_c$ is
isomorphic to $A_{\bf{q}}^{{\bf{a}}_c}$, and the commutators of
$A_{\bf{q}'}^{{\bf{a}}_{2c}}$ are the commutators of
$A_{\bf{q}}^{{\bf{a}}_c}$. By Lemma \ref{twisted}, there exists a
homomorphism $\mathbb{Z}^c \otimes_{\mathbb{Z}} \mathbb{Z}^c
\xrightarrow{t} k \setminus \{ 0 \}$ and a quantum complete
intersection $A_{{\bf{p}}}^{{\bf{b}}_c}$, such that
$A_{\bf{q}'}^{{\bf{a}}_{2c}}$ is isomorphic to the twisted tensor
product $A_{\bf{q}}^{{\bf{a}}_c} \otimes_k^t
A_{{\bf{p}}}^{{\bf{b}}_c}$. Now for graded
$A_{\bf{q}}^{{\bf{a}}_c}$-modules $M$ and $N$, Theorem
\ref{cohomology} provides an isomorphism
$$\Ext_{A_{\bf{q}'}^{{\bf{a}}_{2c}}}^*( M \otimes_k^t
A_{{\bf{p}}}^{{\bf{b}}_c}, N \otimes_k^t A_{{\bf{p}}}^{{\bf{b}}_c} )
\simeq \Ext_{A_{\bf{q}}^{{\bf{a}}_c}}^*(M,N) \otimes_k
\Ext_{A_{{\bf{p}}}^{{\bf{b}}_c}}^*( A_{{\bf{p}}}^{{\bf{b}}_c},
A_{{\bf{p}}}^{{\bf{b}}_c} )$$ of graded vector spaces. The result
now follows from Theorem \ref{symmetrysymmetric}, since all the
commutators of $A_{\bf{q}'}^{{\bf{a}}_{2c}}$ are roots of unity.
\end{proof}

\end{document}